\documentclass[11pt,letterpaper,reqno]{amsart}

\usepackage[T1]{fontenc}
\usepackage{amsmath,amssymb,amsfonts,amsthm}
\usepackage{mathtools}
\usepackage{aliascnt}
\usepackage{microtype}
\usepackage{tikz}
\usetikzlibrary{positioning}
\usepackage{xcolor}
\usepackage{doi}
\usepackage{hyperref}
\usepackage{bookmark}
\usepackage[nameinlink, capitalize,noabbrev]{cleveref}

\hypersetup{
  pdfstartview={FitH},
  colorlinks=true,
  linkcolor=blue,
  citecolor=blue,
  urlcolor=blue
}

\newtheorem{theorem}{Theorem}[section]

\newaliascnt{lemma}{theorem}
\newtheorem{lemma}[lemma]{Lemma}
\aliascntresetthe{lemma}

\newaliascnt{proposition}{theorem}
\newtheorem{proposition}[proposition]{Proposition}
\aliascntresetthe{proposition}

\newaliascnt{corollary}{theorem}
\newtheorem{corollary}[corollary]{Corollary}
\aliascntresetthe{corollary}

\newaliascnt{conjecture}{theorem}
\newtheorem{conjecture}[conjecture]{Conjecture}
\aliascntresetthe{conjecture}

\newaliascnt{problem}{theorem}

\aliascntresetthe{problem}

\theoremstyle{definition}
\newaliascnt{definition}{theorem}

\aliascntresetthe{definition}

\newaliascnt{example}{theorem}

\aliascntresetthe{example}

\newaliascnt{remark}{theorem}

\aliascntresetthe{remark}

\crefname{theorem}{Theorem}{Theorems}
\Crefname{theorem}{Theorem}{Theorems}
\crefname{lemma}{Lemma}{Lemmas}
\Crefname{lemma}{Lemma}{Lemmas}
\crefname{proposition}{Proposition}{Propositions}
\Crefname{proposition}{Proposition}{Propositions}
\crefname{corollary}{Corollary}{Corollaries}
\Crefname{corollary}{Corollary}{Corollaries}
\crefname{conjecture}{Conjecture}{Conjectures}
\Crefname{conjecture}{Conjecture}{Conjectures}

\newcommand{\cG}{\mathcal{G}}

\begin{document}

\title{The $\Delta$-Conjecture for CIS $d$-Graphs}

\author[Y.~Liu]{Yinchen Liu}
\address{Institute for Interdisciplinary Information Sciences, Tsinghua University, Beijing 100084, P. R. China}
\email{liuyinch23@mails.tsinghua.edu.cn}

\author[Q.~Tang]{Quanyu Tang}
\address{School of Mathematical Sciences, University of Science and Technology of China, Hefei 230026, P. R. China}
\email{tangquanyu827@gmail.com}

\begin{abstract}
We prove the $\Delta$-conjecture, which dates back to Gurvich's 1978 thesis. Specifically, let the edges of a complete graph be colored with colors $1,\ldots,d$, and for each $i$ let $G_i$ be the graph on the same vertex set formed by the edges of color $i$. We prove that if every choice of a maximal stable set $S_i$ of $G_i$, one for each $i\in[d]$, has nonempty intersection, then the coloring contains no rainbow triangle. Together with a result of Andrade, Boros, and Gurvich, this characterizes CIS $d$-graphs as precisely the Gallai $d$-graphs whose chromatic components are ordinary CIS graphs. We also show that every factor in the canonical modular decomposition of a CIS $d$-graph is a CIS $d$-graph whose edge-coloring uses at most two colors.
\end{abstract}

\subjclass[2020]{Primary 05C15; Secondary 05C69}
% 05C15  	Coloring of graphs and hypergraphs
% 05C69  	Vertex subsets with special properties (dominating sets, independent sets, cliques, etc.)

\keywords{CIS graph, Gallai coloring, rainbow triangle,
$\Delta$-conjecture, modular decomposition}

\maketitle

\section{Introduction}

Throughout the paper, all graphs are finite and simple, and maximality is with
respect to inclusion.  For a graph \(H=(W,F)\), a set \(S\subseteq W\) is
\emph{stable} (or \emph{independent}) if
\(\binom{S}{2}\cap F=\varnothing\).  It is a maximal stable set if and only if
every vertex of \(W\setminus S\) has a neighbor in \(S\).

A graph is called \emph{CIS} if every maximal clique meets every maximal
stable set.  The property appears under the name \emph{clique--kernel
intersection property} in the survey of Brandst\"adt, Le, and Spinrad
\cite[pp.~175--176]{BrandstadtLeSpinrad1999}; a systematic account of CIS graphs and their
multicolored extensions was later given by Andrade, Boros, and Gurvich
\cite{AndradeBorosGurvich2006,AndradeBorosGurvich2018}.  To distinguish
this notion from the multicolored generalization recalled below, we refer
to it as the \emph{ordinary} CIS property.

The structural theory of ordinary CIS graphs has developed
in several directions.  Berge posed two early problems, both resolved by
Zang \cite{Berge1985,Zang1995}.  A conjecture of Chv\'atal giving a local
characterization in terms of extensions of induced \(P_4\)'s was proved by
Deng, Li, and Zang and, independently, by Andrade, Boros, and Gurvich
\cite{DengLiZang2004,DengLiZang2005,AndradeBorosGurvich2006,
AndradeBorosGurvich2018}.  Further work includes the characterization of
almost CIS graphs \cite{BorosGurvichZverovich2009,WuZangZhang2009}, the
study of CIS graphs in relation to split, equistable, and other
clique--stable-set classes \cite{BorosGurvichMilanic2017}, and structural
results for vertex-transitive and claw-free CIS graphs
\cite{DobsonHujdurovicMilanicVerret2015,AlconGutierrezMilanic2019}.

The complexity of recognizing ordinary CIS graphs was a
long-standing open problem, posed by Chv\'atal in the 1990s
\cite{TaoYangZang2026}.  It led to two competing conjectures.
Andrade, Boros, and Gurvich conjectured that CIS graphs can be
recognized in polynomial time
\cite{AndradeBorosGurvich2006,AndradeBorosGurvich2018},
whereas Zverovich and Zverovich conjectured that the recognition
problem is \(\mathsf{coNP}\)-complete
\cite{ZverovichZverovich2006}.  Tao, Yang, and Zang very recently
settled the problem by proving that recognizing ordinary CIS graphs
is \(\mathsf{coNP}\)-complete \cite{TaoYangZang2026}.

We next recall the multicolored setting.  For an integer \(d\geq 2\), write
\([d]=\{1,\ldots,d\}\).  A \emph{\(d\)-graph} is a
complete graph whose edges are colored with colors in \([d]\); formally, it
is a pair
\[
  \cG=(V;\varphi),
  \qquad
  \varphi:\binom{V}{2}\longrightarrow[d].
\]
The coloring need not use every color.  For each \(i\in[d]\), let
\[
  E_i=\varphi^{-1}(i)
  \qquad\text{and}\qquad
  G_i=(V,E_i).
\]
The graphs \(G_1,\ldots,G_d\) are the \emph{chromatic components} of
\(\cG\).  Thus, \(S\subseteq V\) is stable in \(G_i\) if and only if
\(\varphi(uv)\neq i\) for all distinct \(u,v\in S\); it is a maximal stable
set if and only if, in addition, every vertex of \(V\setminus S\) has an
\(i\)-colored neighbor in \(S\).

The \(d\)-graph \(\cG\) is called \emph{CIS} if, for every choice
of a maximal stable set \(S_i\) of \(G_i\), one for each \(i\in[d]\), one has
\[
  \bigcap_{i=1}^{d}S_i\neq\varnothing.
\]
When \(d=2\), the chromatic components are complementary, so this definition
recovers the usual CIS property.  Notice also that the intersection of one
stable set from each chromatic component has size at most one: if two distinct
vertices belonged to all the sets, then the edge joining them would have some
color \(i\), contradicting stability in \(G_i\). Thus, in a CIS \(d\)-graph, the intersection of any choice of one
maximal stable set from each chromatic component consists of exactly
one vertex. This
observation underlies the blocker and anti-blocker formulations developed by
Gurvich \cite{Gurvich2011}.

A triangle is \emph{rainbow} if its three edges have pairwise distinct
colors.  An edge-coloring of a complete graph with no rainbow triangle is
called a \emph{Gallai coloring}, after Gallai's foundational work
\cite{Gallai1967}; see Gy\'arf\'as and Simonyi \cite{GyarfasSimonyi2004} for a
modern graph-theoretic treatment.  We call a \(d\)-graph \emph{Gallai} when
its coloring is a Gallai coloring.

The following conjecture is traced to Gurvich's 1978 thesis
\cite{Gurvich1978}.  It is commonly called the \(\Delta\)-conjecture.

\begin{conjecture}\label{conj:delta}
Every CIS \(d\)-graph is Gallai.
\end{conjecture}

The conjecture has motivated a substantial body of structural work.  A
reduction attributed to Andrey Gol'berg in 1975 already shows that it
suffices to treat three colors, by merging colors into three groups
\cite[Section~3.10]{GurvichNaumova2026}.  Building on this reduction,
Gurvich studied modular decompositions of complete edge-colored graphs
\cite{Gurvich2009}, Andrade, Boros, and Gurvich classified the locally
minimal non-CIS \(d\)-graphs \cite{AndradeBorosGurvich2010}, and blocker and
anti-blocker methods were developed in \cite{Gurvich2011}.  A comprehensive
account, including the local settling configurations around rainbow
triangles that we use below, appears in
\cite{AndradeBorosGurvich2006,AndradeBorosGurvich2018}, where Steven
Jaslar's computer verification of the three-color case for all
\(3\)-graphs on at most twelve vertices is also reported
\cite[Section~1.6]{AndradeBorosGurvich2018}.  Despite this body of work, the
conjecture remained open as recently as the survey of Gurvich and Naumova
\cite[Section~3.10]{GurvichNaumova2026}.

We prove the conjecture in the following equivalent, contrapositive form,
from which Conjecture~\ref{conj:delta} follows at once.

\begin{theorem}\label{thm:main}
Let \(\cG=(V;\varphi)\) be a \(d\)-graph.  If \(\cG\) contains a rainbow triangle,
then there exist maximal stable sets \(S_i\) of \(G_i\), one for each
\(i\in[d]\), such that
\[
  \bigcap_{i=1}^{d}S_i=\varnothing.
\]
\end{theorem}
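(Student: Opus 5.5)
The plan is to argue by a minimal counterexample and to build the three stable sets greedily around a fixed rainbow triangle. First I will reduce to \(d=3\) using Gol'berg's merging argument from \cite[Section~3.10]{GurvichNaumova2026}, which I take as given. Fix a rainbow triangle \(\{a,b,c\}\) and group the colours into three classes so that its three edges fall into different classes. I must still check that three maximal stable sets with empty intersection in the merged \(3\)-graph lift to \(d\) maximal stable sets with empty intersection in \(\cG\). A set stable in \(G_i\cup G_j\) is stable in both \(G_i\) and \(G_j\), so it can be extended to a maximal stable set in each. Any common vertex of the extended sets must then lie in all of them, and the argument has to rule this out. If this lifting step turns out to be delicate, I will redo it inside the main argument rather than cite it. So assume \(d=3\), with \(\varphi(ab)=1\), \(\varphi(bc)=2\), \(\varphi(ca)=3\).

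Next I take a counterexample \(\cG\) with \(|V|\) minimal: it contains a rainbow triangle, yet every choice of maximal stable sets \(S_1,S_2,S_3\) meets in exactly one vertex. If \(V=\{a,b,c\}\), set \(S_1=\{a,c\}\), \(S_2=\{a,b\}\), \(S_3=\{b\}\). These are maximal: \(b\) has a \(1\)-neighbour in \(S_1\), \(c\) has a \(2\)-neighbour in \(S_2\), and \(a,c\) have \(3\)-neighbours \(c\) and \(b\) respectively in \(S_3\). Their intersection is empty, so a counterexample has at least four vertices. In that case pick \(v\notin\{a,b,c\}\). The \(3\)-graph \(\cG-v\) still contains the rainbow triangle, so by minimality it has maximal stable sets \(T_1,T_2,T_3\) with \(T_1\cap T_2\cap T_3=\varnothing\).

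Each \(T_i\) either stays maximal in \(G_i\) (when \(v\) has an \(i\)-neighbour in \(T_i\)) or becomes maximal after adding \(v\). So the only obstruction is the case where \(v\) is added to all three sets, which happens exactly when \(v\) has no \(i\)-coloured neighbour in \(T_i\) for each \(i\in[3]\). The main step, and the expected obstacle, is to exploit this configuration. For each vertex \(x\in T_1\), \(\varphi(vx)\in\{2,3\}\), and similarly for \(T_2\) and \(T_3\). I will then try to modify one set, say replace \(T_1\) by a maximal stable set of \(G_1\) containing \(v\) together with a suitable part of \(T_1\), so that the new intersection becomes empty. For this I will use the constraints from \(v\) on \(T_2\cap T_3\), \(T_1\cap T_3\) and \(T_1\cap T_2\) (these are pairwise intersections, which may be nonempty even though the triple intersection is empty), together with the local settling configurations around rainbow triangles from \cite{AndradeBorosGurvich2018}.

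If the single-vertex deletion does not close the argument, my fallback is to vary the deleted vertex. One option is to choose \(v\) as a module-type vertex via the modular decomposition of \cite{Gurvich2009}, or to delete a vertex whose colour profile towards \(\{a,b,c\}\) is the most constrained. I would then do a case analysis on \(\varphi(va),\varphi(vb),\varphi(vc)\), since these three colours largely determine whether \(v\) can lie in sets containing triangle vertices.
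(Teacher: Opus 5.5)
Your colour-merging reduction matches the paper's, and the lifting step you left open closes as in Lemma~\ref{lem:projection}. If $v$ lies in every extended $T_i$ but $v\notin S_j$, maximality of $S_j$ gives $u\in S_j$ with $\varphi(uv)=i\in P_j$, and then $u,v\in T_i$ contradicts stability. Your base case needs a small fix: $\{b\}$ is not maximal in $G_3$, since $\varphi(ab)=1$ lets you add $a$; take $S_3=\{b,c\}$ instead.

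The genuine gap is the three-colour inductive step, which you do not carry out. Deleting one arbitrary vertex $v$ gives you witnesses $T_1,T_2,T_3$ for $\cG-v$ over which you have no control. The case you call the obstacle, where $v$ has no $i$-coloured neighbour in $T_i$ for every $i$, is exactly where a proof is required, and ``modify one set'' and ``vary the deleted vertex'' are not arguments. The obstruction is real. For $d=2$, the bull (the path $abcd$ plus a vertex $e$ adjacent to $b$ and $c$) is CIS. Yet deleting $e$ leaves the non-CIS $P_4$, whose witnesses $\{a,d\}$ and $\{b,c\}$ are both forced to absorb $e$. So witnesses do not lift across a single-vertex deletion in general, and the rainbow structure must dictate what is deleted.

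The missing idea is the restriction lemma (Lemma~\ref{lem:restriction}): delete the whole colour-neighbourhood $N_r(Z)$ of a nonempty $G_r$-stable set $Z$, not a single vertex. Put $A=V\setminus N_r(Z)$.
\begin{itemize}
\item In $\cG[A]$ the vertices of $Z$ are isolated in colour $r$, so \emph{every} maximal stable set $I_r$ of the $r$-th component contains $Z$.
\item Hence $I_r$ is already maximal in $G_r$, because each deleted vertex has an $r$-neighbour in $Z$.
\item For $i\neq r$, any maximal extension $T_i$ of $I_i$ satisfies $T_i\cap A=I_i$.
\end{itemize}
Thus \emph{any} empty-intersection witnesses for $\cG[A]$ lift to $\cG$, with no case analysis.

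To keep a rainbow triangle inside $\cG[A]$, the paper uses the settling configuration concretely.
\begin{itemize}
\item Extend $\{b,c\},\{c,a\},\{a,b\}$, and then $\{a,c\},\{a,b\},\{b,c\}$, to maximal stable sets of $G_1,G_2,G_3$.
\item If either triple has empty intersection you are done. Otherwise you obtain $x,y$ with $\varphi(xa)=1$, $\varphi(xb)=2$, $\varphi(xc)=3$ and $\varphi(ya)=3$, $\varphi(yb)=1$, $\varphi(yc)=2$.
\item After a cyclic relabelling one may assume $\varphi(xy)=1$, so $cxy$ is rainbow.
\item Taking $Z=\{x\}$ and $r=2$ removes $b$ but keeps $c,x,y$, so induction on $|V|$ applies.
\end{itemize}
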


Two ingredients of the proof are already present in the literature:
the reduction to three colors by merging colors
\cite[Proposition~6]{AndradeBorosGurvich2018}, and the settling
configuration around a rainbow triangle
\cite[Section~4.2]{AndradeBorosGurvich2018}. In the CIS setting, the latter yields two additional vertices with forced incident colors and, regardless of the
color of the edge joining them, produces a new rainbow triangle.
The key new ingredient is Lemma~\ref{lem:restriction}, which allows this
local configuration to be used inductively: after deleting an appropriate
color-neighborhood, one obtains a proper induced subgraph that still
contains a rainbow triangle, and empty-intersection maximal stable sets
in the smaller graph can be lifted to the original graph.

Our theorem also completes the reduction of the multicolored
CIS property to the ordinary one.  Indeed, combining it with
\cite[Proposition~11]{AndradeBorosGurvich2018} gives the exact
characterization
\[
  \cG\text{ is a CIS \(d\)-graph}
  \quad\Longleftrightarrow\quad
  \begin{cases}
    \cG\text{ is Gallai},\\
    G_i\text{ is an ordinary CIS graph for every }i\in[d];
  \end{cases}
\]
see Proposition~\ref{prop:component-characterization}.  Thus the
multicolored CIS property introduces no further condition beyond a
Gallai coloring and the ordinary CIS property of its chromatic components.

As a further structural consequence, combining our result
with the canonical modular decomposition of symmetric \(2\)-structures
gives every CIS \(d\)-graph a canonical modular-decomposition tree whose
internal-node quotients use at most two colors and, viewed as ordinary
graphs, are CIS; see
Corollary~\ref{cor:modular-decomposition}.

The paper is organized as follows.  Section~\ref{sec:restriction} proves
the restriction lemma that drives the induction.  Section~\ref{sec:three}
establishes the three-color case by induction, using the settling
configuration around a rainbow triangle.  Section~\ref{sec:general} reduces
the general case to three colors by color merging and completes the proof
of Theorem~\ref{thm:main}, then records its structural consequences.

\section{The restriction lemma}\label{sec:restriction}

We begin with the restriction lemma that drives the induction. The projection argument of
Section~\ref{sec:general} and the settling configuration used in
Section~\ref{sec:three} are both already present in the literature.  For
\(r\in[d]\) and \(Z\subseteq V\), write
\[
  N_r(Z):=\{v\in V\setminus Z: vz\in E_r\text{ for some }z\in Z\}.
\]
For \(A\subseteq V\), let \(\cG[A]\) denote the induced \(d\)-graph
obtained by restricting \(\varphi\) to \(\binom{A}{2}\).

\begin{lemma}\label{lem:restriction}
Fix \(r\in[d]\). Let \(Z\) be a nonempty stable set of \(G_r\), and put $A=V\setminus N_r(Z)$. Suppose that there exist maximal stable sets \(I_i\) of the \(i\)-th
chromatic component of \(\cG[A]\), one for each \(i\in[d]\), such that $\bigcap_{i=1}^{d}I_i=\varnothing$. Then there exist maximal stable sets \(T_i\) of \(G_i\), one for each
\(i\in[d]\), such that
\[
  \bigcap_{i=1}^{d}T_i=\varnothing.
\]
\end{lemma}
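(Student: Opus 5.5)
The plan is to lift each \(I_i\) to a maximal stable set of \(G_i\) by greedy extension, and then to argue that the new vertices, all of which lie in \(N_r(Z)\), cannot form a common intersection point. I will use three observations throughout. First, \(Z\subseteq A\), because \(N_r(Z)\) is by definition disjoint from \(Z\). Second, every \(v\in N_r(Z)\) has an \(r\)-colored neighbor in \(Z\). Third, each \(I_i\) is maximal stable in the \(i\)-th component of \(\cG[A]\). Hence every vertex of \(A\setminus I_i\) already has an \(i\)-neighbor in \(I_i\). It follows that any maximal stable set \(T_i\supseteq I_i\) of \(G_i\) satisfies \(T_i\cap A=I_i\). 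Consequently \(\bigcap_i T_i\cap A=\bigcap_i I_i=\varnothing\), and the only possible common vertices lie in \(N_r(Z)\).

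\textbf{Step 1 (colors \(i\neq r\)).} For each \(i\neq r\), let \(T_i\) be any maximal stable set of \(G_i\) containing \(I_i\). By the observation above, \(T_i\cap A=I_i\).

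\textbf{Step 2 (color \(r\)).} Here I will exploit \(Z\), and this is the step I expect to be the main obstacle. The naive choice is \(T_r\supseteq I_r\). It may contain vertices \(v\in N_r(Z)\), namely those having no \(r\)-neighbor in \(I_r\), and such a \(v\) could lie in every \(T_i\). What I would try first is to decide the \(r\)-component on the vertices of \(N_r(Z)\) before extending. I would consider the set
\[
W=\{v\in N_r(Z): v \text{ has no } r\text{-neighbor in } I_r\}.
\]
For \(v\in W\), pick \(z\in Z\) with \(\varphi(vz)=r\). Since \(z\in A\) and \(\bigcap_i I_i=\varnothing\), there is a color \(j\) with \(z\notin I_j\). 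If \(j=r\), then \(z\) has an \(r\)-neighbor in \(I_r\), and I will try to show that this forces \(v\) to be dominated in some other color. If \(j\neq r\), I will try to arrange the greedy extension of \(I_j\) so that \(v\) is excluded from \(T_j\). This amounts to ordering the extensions so that vertices of \(W\) are processed only after vertices that dominate them.

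\textbf{Fallback.} If this ordering argument does not close, I would replace \(Z\) by a maximal stable set of \(G_r\) containing \(Z\), noting that \(A\) only shrinks by vertices already adjacent in color \(r\). I would then check whether some vertex of \(Z\) in each \(I_i\)-failure can serve as a uniform witness, so that \(v\notin T_{j}\) for the corresponding \(j\).

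Once every vertex of \(N_r(Z)\) is shown to be missing from at least one \(T_i\), combining this with \(T_i\cap A=I_i\) gives \(\bigcap_{i=1}^d T_i=\varnothing\). That is the desired conclusion of Lemma~\ref{lem:restriction}.
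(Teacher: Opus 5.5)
There is a genuine gap in Step 2. You never show that no vertex of \(N_r(Z)\) lies in every \(T_i\). The step consists only of strategies you ``will try'': ordering the greedy extensions, then enlarging \(Z\). Neither is carried out, and neither would close as sketched:
\begin{itemize}
\item Excluding \(v\) from \(T_j\) requires \(v\) to have a \(j\)-colored neighbor in \(T_j\), and no processing order can create one.
\item Replacing \(Z\) by a larger stable set \(Z'\) replaces \(A\) by \(V\setminus N_r(Z')\). The given \(I_i\) need not be maximal stable sets there, so the hypothesis is lost.
\end{itemize}

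The missing idea is a one-line observation that makes Step 2 trivial. Each \(z\in Z\) is isolated in the \(r\)-th chromatic component of \(\cG[A]\). Indeed, since \(Z\) is stable in \(G_r\), every \(r\)-colored neighbor of \(z\) lies in \(N_r(Z)\), which has been deleted. A maximal stable set must contain every isolated vertex, so \(Z\subseteq I_r\). Hence every \(v\in N_r(Z)\) has an \(r\)-colored neighbor in \(Z\subseteq I_r\). So your set \(W\) is empty, and \(I_r\) is already a maximal stable set of \(G_r\). In particular, your case \(j=r\), where \(z\notin I_r\), never occurs.

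Now take \(T_r=I_r\subseteq A\). Then \(\bigcap_i T_i\subseteq T_r\subseteq A\), and your Step 1 observation \(T_i\cap A=I_i\) gives \(\bigcap_i T_i=\bigcap_i I_i=\varnothing\). This is exactly the paper's proof. Your Step 1 and your reduction to the vertices of \(N_r(Z)\) are correct and match it.
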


\begin{proof}
Every vertex of \(Z\) is isolated in the subgraph of \(G_r\) induced by
\(A\).  Hence \(Z\subseteq I_r\).  Moreover, \(I_r\) is already a maximal
stable set of \(G_r\).  Indeed, every vertex of \(A\setminus I_r\) has an
\(r\)-colored neighbor in \(I_r\), by maximality inside \(A\), whereas every
vertex of \(V\setminus A\) has an \(r\)-colored neighbor in
\(Z\subseteq I_r\), by the definition of \(A\).

For each \(i\neq r\), extend \(I_i\) to a maximal stable set \(T_i\) of
\(G_i\), and set \(T_r=I_r\).  The maximality of \(I_i\) in the subgraph of
\(G_i\) induced by \(A\) implies that every vertex of \(A\setminus I_i\) has
an \(i\)-colored neighbor in \(I_i\).  Consequently,
\[
  T_i\cap A=I_i
  \qquad\text{for every }i\neq r.
\]
If a vertex \(v\) belonged to every \(T_i\), then
\(v\in T_r=I_r\subseteq A\), and hence \(v\in T_i\cap A=I_i\) for every
\(i\neq r\).  This would put \(v\) in
\(\bigcap_{i=1}^{d}I_i\), a contradiction.
\end{proof}

\section{The three-color case}\label{sec:three}

We now prove the essential case of the conjecture.

\begin{theorem}\label{thm:three}
Let \(\cG=(V;\varphi)\) be a \(3\)-graph containing a rainbow triangle.  Then
there exist maximal stable sets \(S_i\) of \(G_i\), one for each
\(i\in[3]\), such that
\[
  S_1\cap S_2\cap S_3=\varnothing.
\]
\end{theorem}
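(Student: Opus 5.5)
The plan is to argue by induction on \(|V|\), taking a counterexample \(\cG\) with \(|V|\) minimal, and to use Lemma~\ref{lem:restriction} to keep shrinking it. The induction hypothesis applies as follows: if we find \(r\in[3]\) and a nonempty stable set \(Z\) of \(G_r\) with \(N_r(Z)\neq\varnothing\) such that \(A=V\setminus N_r(Z)\) still contains a rainbow triangle, then \(\cG[A]\) has maximal stable sets with empty intersection by minimality. Lemma~\ref{lem:restriction} then lifts them to \(\cG\), which is a contradiction. So the whole proof consists of producing such a pair \((r,Z)\).

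\emph{First reduction.} Fix a rainbow triangle \(a,b,c\), labelled so that \(\varphi(bc)=1\), \(\varphi(ca)=2\), \(\varphi(ab)=3\). Take \(Z=\{a\}\) and \(r=1\). Neither \(b\) nor \(c\) is a \(1\)-neighbor of \(a\), so the triangle survives in \(A\). Hence minimality forces \(N_1(a)=\varnothing\), and symmetrically \(N_2(b)=N_3(c)=\varnothing\). Consequently \(a\) is isolated in \(G_1\) and lies in every maximal stable set of \(G_1\); similarly \(b\) lies in every maximal stable set of \(G_2\), and \(c\) in every one of \(G_3\).

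\emph{Settling configuration.} Extend \(\{a,c\}\) to a maximal stable set \(S_1\) of \(G_1\), \(\{a,b\}\) to \(S_2\) of \(G_2\), and \(\{b,c\}\) to \(S_3\) of \(G_3\); each pair is stable because of the colors on the triangle. None of \(a,b,c\) lies in all three sets:
\begin{itemize}
\item[(i)] \(a\notin S_3\), since \(ab\in E_3\) and \(b\in S_3\);
\item[(ii)] \(b\notin S_1\), since \(bc\in E_1\) and \(c\in S_1\);
\item[(iii)] \(c\notin S_2\), since \(ca\in E_2\) and \(a\in S_2\).
\end{itemize}
Since \(\cG\) is a counterexample, the intersection is a single vertex \(v\notin\{a,b,c\}\). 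Membership of \(v\) in the three sets forces its colors to the triangle: \(\varphi(va)=3\), \(\varphi(vb)=1\), \(\varphi(vc)=2\). This is the forced-color vertex from \cite[Section~4.2]{AndradeBorosGurvich2018}.

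\emph{Second forced vertex.} Next I would produce a second vertex \(w\) with a different forced pattern, by choosing the extensions differently. For instance, take \(S_1'\supseteq\{a,c\}\) maximal in \(G_1\) with \(v\notin S_1'\), which is possible if \(v\) can be blocked by a \(1\)-neighbor of \(v\) that is compatible with \(a\) and \(c\); the other two sets are varied symmetrically. The goal is a vertex \(w\) whose forced colors to \(\{a,b,c,v\}\) are such that the edge \(vw\) creates a rainbow triangle whatever its color. This requires a short case check over \(\varphi(vw)\in\{1,2,3\}\): in each case one of \(vw a\), \(vwb\), \(vwc\) is rainbow.

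\emph{Closing the induction.} For each case, I would choose \(r\) and a stable set \(Z\) of \(G_r\) that meets none of the \(r\)-neighbors of the new rainbow triangle, so that this triangle survives in \(A\). Natural candidates are \(Z=\{a\}\) or \(Z=\{v\}\) with \(r=1\). We also need \(N_r(Z)\neq\varnothing\), for example because it contains \(b\) or \(c\) from the old triangle. With these conditions, Lemma~\ref{lem:restriction} and minimality give the contradiction.

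\emph{Expected obstacle.} The main difficulty is the step producing the second vertex and matching each case to a restriction. One must guarantee that the alternative extensions exist, that is, that \(v\) can actually be excluded. One must also guarantee that, in every case for \(\varphi(vw)\), the chosen \(Z\) deletes at least one vertex while keeping the new rainbow triangle. If blocking \(v\) directly fails, I would first try applying the first reduction to the triangles through \(v\) to obtain more isolated-type constraints on \(v\), and then reuse the settling argument on those triangles.
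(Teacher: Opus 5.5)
Your first reduction is correct and convenient: in a minimal counterexample, each vertex of a rainbow triangle has no edge in the color of the opposite edge. Your first settling vertex \(v\) is also correct. The gap is the step you yourself flag, producing the second vertex \(w\), and the mechanism you propose cannot work. If you keep the same stable pairs \(\{a,c\}\subseteq S_1'\), \(\{a,b\}\subseteq S_2'\), \(\{b,c\}\subseteq S_3'\), then any common vertex \(w\) is forced, by exactly the argument that gave \(v\), to satisfy \(\varphi(wa)=3\), \(\varphi(wb)=1\), \(\varphi(wc)=2\). That is the same pattern as \(v\). Each of \(vwa\), \(vwb\), \(vwc\) then has two edges of equal color, so no rainbow triangle is forced, whatever \(\varphi(vw)\) is. Moreover, excluding \(v\) from \(S_1'\) needs a \(1\)-neighbor of \(v\) that is stable with \(a\) and \(c\) in \(G_1\). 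The obvious candidate \(b\) fails because \(\varphi(bc)=1\), and nothing guarantees another one exists. The forced pattern depends only on which pair of the triangle is placed in which \(S_i\), so a different pattern needs different pairs, not a different extension of the same pairs.

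The missing idea is to use the other cyclic assignment. Each \(G_i\) has two stable pairs on the triangle, and you used only one orientation. Extend \(\{a,b\}\), \(\{b,c\}\), \(\{c,a\}\) to maximal stable sets \(S_1',S_2',S_3'\) of \(G_1,G_2,G_3\).
\begin{itemize}
\item None of \(a,b,c\) lies in all three: \(a\notin S_2'\) since \(ac\in E_2\), \(b\notin S_3'\) since \(ab\in E_3\), and \(c\notin S_1'\) since \(bc\in E_1\).
\item So a counterexample yields a common vertex \(w\) with \(\varphi(wa)=2\), \(\varphi(wb)=3\), \(\varphi(wc)=1\); in particular \(w\neq v\).
\item Now \(\varphi(vw)=1,2,3\) makes \(vwa\), \(vwb\), \(vwc\) rainbow, respectively.
\item Take \(Z=\{v\}\) and let \(r\) be the color of the edge opposite \(v\), namely \(r=2,3,1\). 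This deletes \(c\), \(a\), \(b\), respectively, and keeps the new triangle.
\end{itemize}
Equivalently, your first reduction applied to the new triangle contradicts \(\varphi(vc)=2\), \(\varphi(va)=3\), \(\varphi(vb)=1\). This is the paper's argument: two settling vertices from the two orientations, then Lemma~\ref{lem:restriction} with \(Z=\{x\}\).

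Your suggested choices of \(Z\) also fall short. With \(Z=\{a\}\), \(r=1\), nothing is deleted, since your own reduction gives \(N_1(a)=\varnothing\). With \(Z=\{v\}\), \(r=1\), only the case \(\varphi(vw)=3\) is handled.
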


\begin{proof}
We argue by induction on \(n=|V|\).  If \(n=3\), then, after relabeling the
vertices and colors, we may write
\[
  \varphi(ab)=1,
  \qquad
  \varphi(bc)=2,
  \qquad
  \varphi(ca)=3.
\]
The sets \(\{b,c\}\), \(\{c,a\}\), and \(\{a,b\}\) are then maximal stable
sets of \(G_1\), \(G_2\), and \(G_3\), respectively, and their intersection
is empty.

Assume that \(n>3\) and that the result holds for every smaller \(3\)-graph
containing a rainbow triangle.  Choose a rainbow triangle \(abc\) and relabel
its vertices and colors so that
\[
  \varphi(ab)=1,
  \qquad
  \varphi(bc)=2,
  \qquad
  \varphi(ca)=3.
\]
Extend \(\{b,c\}\), \(\{c,a\}\), and \(\{a,b\}\) to maximal stable sets
\(X_1\), \(X_2\), and \(X_3\) of \(G_1\), \(G_2\), and \(G_3\),
respectively (note that in $G_1,G_2,G_3$ the sets $\{b,c\}$, $\{c,a\}$, and $\{a,b\}$ are independent).  If \(X_1\cap X_2\cap X_3=\varnothing\), we are done.
Otherwise, let \(x\in X_1\cap X_2\cap X_3\). Necessarily \(x\notin\{a,b,c\}\): indeed,
\(a\notin X_1\), \(b\notin X_2\), and \(c\notin X_3\). The stability of the three
sets forces
\[
  \varphi(xa)=1,
  \qquad
  \varphi(xb)=2,
  \qquad
  \varphi(xc)=3.
\]

\begin{figure}[htbp]
\centering
\begin{tikzpicture}[
  scale=1.05,
  every node/.style={font=\small},
  vertex/.style={
    circle,
    draw=black,
    line width=0.8pt,
    fill=white,
    minimum size=20pt,
    inner sep=1.5pt
  },
  colorone/.style={
    draw=red,
    line width=1.2pt
  },
  colortwo/.style={
    draw=green!55!black,
    line width=1.2pt
  },
  colorthree/.style={
    draw=blue,
    line width=1.2pt
  }
]

%---------------- vertices ----------------
\node[vertex] (a) at (0,2.1) {$a$};
\node[vertex] (b) at (-2.35,0) {$b$};
\node[vertex] (c) at (2.35,0) {$c$};
\node[vertex] (x) at (-4.15,2.45) {$x$};
\node[vertex] (y) at (4.15,2.45) {$y$};

%---------------- main rainbow triangle abc ----------------
\draw[colorone]   (a) -- (b);
\draw[colortwo]   (b) -- (c);
\draw[colorthree] (c) -- (a);

%---------------- edges from x ----------------
\draw[colorone]   (x) -- (a);
\draw[colortwo]   (x) -- (b);
\draw[colorthree] (x) -- (c);

%---------------- edges from y ----------------
\draw[colorthree] (y) -- (a);
\draw[colorone]   (y) -- (b);
\draw[colortwo]   (y) -- (c);

%---------------- xy after cyclic relabeling ----------------
\draw[colorone] (x) to[bend left=22] (y);

%---------------- centered legend ----------------
\draw[colorone]   (-3.9,-1.55) -- (-2.8,-1.55);
\node[anchor=west] at (-2.55,-1.55) {color \(1\)};

\draw[colortwo]   (-0.95,-1.55) -- (0.15,-1.55);
\node[anchor=west] at (0.40,-1.55) {color \(2\)};

\draw[colorthree] (2.00,-1.55) -- (3.10,-1.55);
\node[anchor=west] at (3.35,-1.55) {color \(3\)};

\end{tikzpicture}
\caption{The settling configuration after a cyclic relabeling, so that
\(\varphi(xy)=1\); the triangle \(cxy\) is again rainbow. Colors \(1,2,3\)
are red, green, and blue, respectively.}
\label{fig:settling}
\end{figure}

Next extend \(\{a,c\}\), \(\{a,b\}\), and \(\{b,c\}\) to maximal stable
sets \(Y_1\), \(Y_2\), and \(Y_3\) of \(G_1\), \(G_2\), and \(G_3\),
respectively.  Again, there is nothing to prove if
\(Y_1\cap Y_2\cap Y_3=\varnothing\).  Otherwise, let
\(y\in Y_1\cap Y_2\cap Y_3\). Necessarily \(y\notin\{a,b,c\}\): indeed, \(b\notin Y_1\), \(c\notin Y_2\), and \(a\notin Y_3\). Stability now forces
\[
  \varphi(ya)=3,
  \qquad
  \varphi(yb)=1,
  \qquad
  \varphi(yc)=2.
\]
In particular \(x\neq y\). Cyclically relabeling \((a,b,c)\mapsto(b,c,a)\) together with colors
\(2,3,1\mapsto1,2,3\) preserves the whole configuration and sends
\(\varphi(xy)\) through \(1\mapsto3\mapsto2\mapsto1\); applying it at most
twice, we may assume
\[
  \varphi(xy)=1,
\]
so the triangle \(cxy\) is rainbow. The forced colors are shown in
Figure~\ref{fig:settling}.

Set \(A=V\setminus N_2(\{x\})\). Since \(\varphi(xb)=2\), we have
\(b\notin A\), so \(A\) is a proper subset of \(V\); since \(\varphi(xc)=3\)
and \(\varphi(xy)=1\), we have \(c,x,y\in A\). Thus \(\cG[A]\) is a smaller
\(3\)-graph containing the rainbow triangle \(cxy\), so by the induction
hypothesis its three chromatic components have maximal stable sets with
empty intersection. Lemma~\ref{lem:restriction}, applied with \(Z=\{x\}\)
and \(r=2\), lifts these to the required maximal stable sets of
\(G_1,G_2,G_3\).
\end{proof}

\section{The general case and structural consequences}\label{sec:general}
Now, we prove the general case by a known color merging reduction.

Let \(2\le k\le d\), and let $[d]=P_1\mathbin{\dot\cup}\cdots\mathbin{\dot\cup}P_k$ be a partition into nonempty sets, and define \(p:[d]\to[k]\) by
\(p(i)=j\) whenever \(i\in P_j\).  The \emph{projection} of
\(\cG=(V;\varphi)\) associated with this partition is the \(k\)-graph
\[
  \cG'=(V;p\circ \varphi).
\]
Projections of CIS \(d\)-graphs are CIS
\cite[Proposition~6]{AndradeBorosGurvich2018}, a known fact for which we
record the following explicit contrapositive form.

\begin{lemma}\label{lem:projection}
For each \(j\in[k]\), let \(S_j\) be a maximal stable set of the \(j\)-th
chromatic component of \(\cG'\).  If
\(\bigcap_{j=1}^{k}S_j=\varnothing\), then there exist maximal stable sets \(T_i\) of \(G_i\), one for each
\(i\in[d]\), such that
\[
  \bigcap_{i=1}^{d}T_i=\varnothing.
\]
\end{lemma}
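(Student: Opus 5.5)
The plan is a direct construction. The $j$-th chromatic component of $\cG'$ is the graph $G'_j=(V,\bigcup_{i\in P_j}E_i)$. A set $S\subseteq V$ is stable in $G'_j$ exactly when it is stable in $G_i$ for every $i\in P_j$.

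For each $i\in[d]$, let $j=p(i)$ and extend $S_j$ to a maximal stable set $T_i$ of $G_i$. This is possible because $S_j$ is stable in $G'_j$, hence stable in $G_i$. It then suffices to show that $\bigcap_{i\in P_j}T_i=S_j$ for every $j$. Granting this,
\[
\bigcap_{i=1}^{d}T_i=\bigcap_{j=1}^{k}\,\bigcap_{i\in P_j}T_i=\bigcap_{j=1}^{k}S_j=\varnothing .
\]

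To prove $\bigcap_{i\in P_j}T_i=S_j$, note first that the inclusion $\supseteq$ is immediate, since $S_j\subseteq T_i$ for each $i\in P_j$. For the reverse inclusion, take $v\in\bigcap_{i\in P_j}T_i$ and suppose $v\notin S_j$. By maximality of $S_j$ in $G'_j$, the vertex $v$ has a neighbor $u\in S_j$ with $\varphi(uv)=i_0$ for some $i_0\in P_j$. Both $u$ and $v$ lie in $T_{i_0}$: $u$ because $S_j\subseteq T_{i_0}$, and $v$ by assumption. This contradicts the stability of $T_{i_0}$ in $G_{i_0}$.

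The only point needing care is this last step. The extensions $T_i$ for $i\in P_j$ may each add vertices outside $S_j$, but no vertex can be added to all of them at once, because each witnessing edge from $S_j$ carries exactly one color in $P_j$. This is the same mechanism as the identity $T_i\cap A=I_i$ in Lemma~\ref{lem:restriction}, now applied across a color class instead of a vertex subset. I expect no real obstacle beyond stating this cleanly.
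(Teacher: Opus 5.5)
Your proof is correct and is essentially the paper's argument: you extend each \(S_j\) to a maximal stable set \(T_i\) of \(G_i\) for every \(i\in P_j\), then rule out a common vertex \(v\notin S_j\) by taking its witness \(u\in S_j\) with \(\varphi(uv)\in P_j\). The only difference is that you package this as the identity \(\bigcap_{i\in P_j}T_i=S_j\), which implicitly uses that each \(P_j\) is nonempty. The paper argues directly by contradiction for a vertex lying in all the \(T_i\).
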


\begin{proof}
For each \(j\in[k]\) and each \(i\in P_j\), the set \(S_j\) is stable in
\(G_i\), because it contains no edge whose original color lies in \(P_j\).
Extend \(S_j\) to a maximal stable set \(T_i\) of \(G_i\).

Suppose that a vertex \(v\) belongs to every \(T_i\).  Fix \(j\in[k]\).  If
\(v\notin S_j\), then the maximality of \(S_j\) in the \(j\)-th chromatic
component of \(\cG'\) gives a vertex \(u\in S_j\) such that
\(p(\varphi(uv))=j\).  Hence \(\varphi(uv)=i\) for some \(i\in P_j\).  Since
\(u\in S_j\subseteq T_i\) and \(v\in T_i\), this contradicts the stability
of \(T_i\) in \(G_i\).  Therefore \(v\in S_j\) for every \(j\), contrary to
\(\bigcap_{j=1}^{k}S_j=\varnothing\).
\end{proof}

We can now complete the proof of the main theorem.

\begin{proof}[Proof of Theorem~\ref{thm:main}]
Let the three colors on a rainbow triangle of \(\cG=(V;\varphi)\) be
\(\alpha\), \(\beta\), and \(\gamma\).  Partition \([d]\) into three
nonempty sets \(P_1,P_2,P_3\) such that
\(\alpha\in P_1\), \(\beta\in P_2\), and \(\gamma\in P_3\), and merge all
colors in each part. The resulting projection is a \(3\)-graph containing
the same rainbow triangle.  By Theorem~\ref{thm:three}, there exist
maximal stable sets \(S_1,S_2,S_3\) of its three chromatic components,
respectively, such that $S_1\cap S_2\cap S_3=\varnothing$. Lemma~\ref{lem:projection} lifts \(S_1,S_2,S_3\) to maximal stable sets of
\(G_1,\ldots,G_d\) whose intersection is empty. 
\end{proof}

We conclude with two structural consequences of Theorem~\ref{thm:main}.
Both were previously noted, conditional on the $\Delta$-conjecture; see
\cite[Sections~3.1 and~3.3]{Gurvich2009} and
\cite[Sections~3.9--3.10]{GurvichNaumova2026}.
For completeness, we include their unconditional derivations from
Theorem~\ref{thm:main} and known results.
\begin{proposition}\label{prop:component-characterization}
Let \(\cG=(V;\varphi)\) be a \(d\)-graph, with chromatic
components \(G_i=(V,E_i)\), \(i\in[d]\).  Then \(\cG\) is CIS if and only if
\(\cG\) is Gallai and every \(G_i\) is an ordinary CIS graph.
\end{proposition}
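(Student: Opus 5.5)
The forward direction is the new content and the reverse direction is quoted from the literature. Accordingly, I would split the proof into the two implications.

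\emph{CIS implies the two conditions.} Suppose \(\cG\) is CIS. By Theorem~\ref{thm:main} (contrapositive), \(\cG\) contains no rainbow triangle, so \(\cG\) is Gallai. For a fixed \(i\in[d]\), consider the projection associated with the partition \(P_1=\{i\}\), \(P_2=[d]\setminus\{i\}\); if \(d=2\) there is nothing to do. This projection is a \(2\)-graph whose chromatic components are \(G_i\) and its complement \(\overline{G_i}\). Maximal stable sets of \(\overline{G_i}\) are exactly the maximal cliques of \(G_i\). Lemma~\ref{lem:projection}, read contrapositively, shows that the projection is CIS. So every maximal clique of \(G_i\) meets every maximal stable set of \(G_i\); that is, \(G_i\) is an ordinary CIS graph. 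This step only needs \(P_2\neq\varnothing\), which holds because \(d\ge 2\).

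\emph{The two conditions imply CIS.} This is the content of \cite[Proposition~11]{AndradeBorosGurvich2018}, which states that a Gallai \(d\)-graph whose chromatic components are ordinary CIS graphs is CIS. I would simply invoke it. If a self-contained argument were wanted, I would induct on \(|V|\) using Gallai's decomposition theorem \cite{Gallai1967}. A Gallai-colored complete graph on at least two vertices has a partition into at least two modules such that between modules only at most two colors appear. One then checks that maximal stable sets of each \(G_i\) restrict compatibly to the modules and to the two-colored quotient. Combining the inductive hypothesis on the modules with the ordinary CIS property of the quotient, which is a \(2\)-graph inherited from some \(G_i\), would give a common vertex.

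The main obstacle is this reverse direction, in particular verifying that the quotient and the modules inherit the ordinary CIS hypothesis. That is why I would rely on the cited proposition rather than reprove it. The forward direction is routine given Theorem~\ref{thm:main} and Lemma~\ref{lem:projection}.
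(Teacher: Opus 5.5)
Your proof is correct. Its overall shape matches the paper's: Gallai-ness comes from Theorem~\ref{thm:main}, and the converse is \cite[Proposition~11]{AndradeBorosGurvich2018}.

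The one real difference is how you show that each \(G_i\) is an ordinary CIS graph. The paper applies Proposition~11 to \(\cG\) once it is known to be Gallai. You instead apply Lemma~\ref{lem:projection} to the partition \(P_1=\{i\}\), \(P_2=[d]\setminus\{i\}\), whose projection is the \(2\)-graph with chromatic components \(G_i\) and \(\overline{G_i}\). Your route keeps the forward implication inside the paper. It also shows that ``\(\cG\) CIS \(\Rightarrow\) every \(G_i\) CIS'' holds for arbitrary \(d\)-graphs, with no appeal to Theorem~\ref{thm:main} or to the Gallai property. The paper's route is shorter, because a single citation covers both directions.

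If you ever flesh out your optional sketch of the converse, one caution applies. The quotient and the modules do not inherit the CIS property merely as induced subgraphs of \(G_i\), since ordinary CIS is not hereditary (the bull is CIS yet contains an induced \(P_4\)). They do inherit it through the substitution equivalence \cite[Propositions~7 and~10]{AndradeBorosGurvich2018}. This works because every module \(M\) of \(\cG\) is also a module of each \(G_i\). Hence \(G_i\) is obtained from the color-\(i\) quotient graph by substituting the graphs \(G_i[M]\) for its vertices. Since you rely on the citation, this does not affect your proof.
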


\begin{proof}
If \(\cG\) is CIS, then it is Gallai by
Theorem~\ref{thm:main}; \cite[Proposition~11]{AndradeBorosGurvich2018} then implies that every \(G_i\) is CIS.
Conversely, if \(\cG\) is Gallai and every \(G_i\) is CIS, the same
proposition implies that \(\cG\) is CIS.
\end{proof}

We next recall the modular decomposition as follows.
A nonempty set \(M\subseteq V\) is a \emph{module} of a
\(d\)-graph \(\cG=(V;\varphi)\) if, for every \(x\in V\setminus M\), the
color \(\varphi(xu)\) is independent of \(u\in M\).  A module is
\emph{strong} if it overlaps no other module, where two sets overlap if
they intersect but neither contains the other.  The strong modules form a
unique rooted tree under inclusion, with root \(V\) and singleton leaves
\cite{EhrenfeuchtRozenberg1990}. The tree is called the \emph{canonical
modular decomposition tree}. If \(M\) is an internal node, its children
partition \(M\). Since each child is a module, all edges between any two
children have the same color. We may therefore contract every child to one
vertex, obtaining a well-defined quotient \(d\)-graph \(Q_M\).
These quotients are the \emph{factors} of the canonical modular
decomposition.

\begin{corollary}\label{cor:modular-decomposition}
Every CIS \(d\)-graph has a unique canonical modular decomposition
whose factors are CIS \(d\)-graphs using at most two colors.
\end{corollary}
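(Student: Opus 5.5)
The plan has three steps: identify the factors with induced subgraphs, show these inherit the CIS property, and then bound the number of colors using Gallai's structure theorem. Uniqueness of the tree is immediate from Ehrenfeucht--Rozenberg \cite{EhrenfeuchtRozenberg1990}, since the strong modules of any symmetric $2$-structure form a unique rooted tree. So only the two properties of each factor $Q_M$ need proof.

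\emph{Step 1 (factors are induced subgraphs).} For an internal node $M$, pick one representative vertex from each child of $M$ and call the resulting set $R$. Because all edges between two distinct children share a common color, the quotient $Q_M$ is isomorphic to $\cG[R]$. It therefore suffices to show that every induced $d$-graph of a CIS $d$-graph that is Gallai with CIS components is again of that type. I will do this through Proposition~\ref{prop:component-characterization}.

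\emph{Step 2 (heredity).} Gallai colorings are trivially hereditary. Heredity of the ordinary CIS property under induced subgraphs fails in general, so I will not argue that way. Instead I will use the module structure. Each child $C$ of $M$ is a module, so I would argue directly with the $d$-colored definition, as follows.
\begin{itemize}
\item Suppose $\cG[R]$ had maximal stable sets $I_1,\dots,I_d$ with empty intersection. I would lift them to $\cG[M]$ by replacing each representative in $I_i$ with a maximal stable set of its child $C$ in color $i$.
\item Because $C$ is a module, the lifted sets are stable and maximal in $\cG[M]$.
\item Any common vertex would lie in some child $C$, and its representative would then lie in every $I_i$, a contradiction.
\end{itemize}
Next I pass from $\cG[M]$ to $\cG$. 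Since $M$ is a module, I would extend each lifted set to $V$ by adding a maximal stable set of $\cG[V\setminus M]$, taken in color $i$, among the vertices whose color to $M$ is not $i$. Maximality needs care here: vertices outside $M$ whose color to $M$ equals $i$ are dominated already, because the lifted set is nonempty. A common vertex of the extended sets would have to lie outside $M$; I still need to rule that out. For this I plan to choose the extensions via Lemma~\ref{lem:projection}-style reasoning, or to argue along the chain of ancestors in the tree, where each module is contained in its parent and the same argument applies one level at a time. This gives that $\cG[R]$ is CIS, hence CIS with Gallai coloring.

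\emph{Step 3 (at most two colors).} Here I invoke Gallai's theorem \cite{Gallai1967,GyarfasSimonyi2004}: in a Gallai coloring there is a nontrivial partition into modules whose quotient uses at most two colors. Applied to the canonical tree, this says that a factor which is prime (has no nontrivial modules) uses at most two colors. A degenerate factor, whose quotient is complete in a single color, uses one color. The main obstacle is the exit step in Step 2, namely producing an empty intersection in $\cG$ from one in $\cG[M]$. My first attempt would be to note that at the parent level the module $M$ behaves like a single vertex, so choosing the sets outside $M$ by induction down the tree avoids creating a common vertex.
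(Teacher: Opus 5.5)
As written, your proposal leaves the exit step $\cG[M]\to\cG$ open, but that step is not a real obstacle: it closes in one line with the construction you already gave. Let $L_i\subseteq M$ be the lifted sets. Let $W_i$ be the set of vertices $v\notin M$ whose common color to $M$ is not $i$. Let $K_i$ be a maximal stable set of color $i$ in $\cG[W_i]$, and put $T_i=L_i\cup K_i$. If $v\notin M$, let $c$ be the common color of the edges from $v$ to $M$. Then $v\notin W_c$, hence $v\notin K_c$; also $v\notin L_c\subseteq M$, so $v\notin T_c$. Thus $\bigcap_i T_i\subseteq M$, and $\bigcap_i T_i\cap M=\bigcap_i L_i=\varnothing$.

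Neither a projection argument nor an induction along the chain of ancestors is needed. Every node of the canonical tree is a module of $\cG$ itself, not merely of its parent.

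Three smaller points. First, the sentence in Step~1 saying it suffices to show that \emph{every} induced $d$-graph of a CIS $d$-graph is again CIS is false as stated, as you note yourself. What you actually prove, and need, is the claim for a transversal $R$ of the children of a module $M$. Second, Proposition~\ref{prop:component-characterization} plays no role in the argument you give. Third, in Step~3 you should say explicitly that the factor is Gallai because $\cG$ is Gallai (Theorem~\ref{thm:main}) and $Q_M\cong\cG[R]$.

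With that fix your argument is correct. The two-color part (Step~3) is the same as the paper's: nonmonochromatic canonical factors are primitive, and Gallai's decomposition theorem then forces at most two colors. For the CIS part the routes differ. The paper does not argue directly. It cites the substitution equivalence of Andrade, Boros, and Gurvich ($Q[H_1,\dots,H_t]$ is CIS if and only if $Q$ and all $H_j$ are) and applies it top-down through the tree.

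Your two lifting steps, from the transversal to $\cG[M]$ and then from $\cG[M]$ to $\cG$, are a self-contained proof of exactly the ``only if'' direction of that equivalence, in the spirit of Lemma~\ref{lem:restriction]}. You apply it in one shot at each node rather than level by level. Your version is elementary and independent of the citation. The paper's version is shorter and also yields the converse (a $d$-graph is CIS if and only if all its canonical factors are), which the corollary does not need.
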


\begin{proof}
By Theorem~\ref{thm:main}, every CIS \(d\)-graph is Gallai.
Every Gallai \(d\)-graph admits a modular decomposition into \(2\)-graphs
\cite[Theorem~A]{GyarfasSimonyi2004}; the underlying fact is implicit in
Gallai \cite[Lemma~(3.2.3)]{Gallai1967}. We claim that the factors in the
\emph{canonical} modular decomposition also use at most two colors. Every
nonmonochromatic canonical factor has only trivial modules, namely the singletons and the whole vertex set~\cite{EhrenfeuchtRozenberg1990}. Suppose that such a factor \(Q\) used at least three colors. Since every
factor of a Gallai \(d\)-graph is again Gallai, the Gallai decomposition
theorem gives a partition of \(V(Q)\) into modules whose quotient uses at
most two colors. Since \(Q\) itself uses at least three colors, not all
parts of this partition can be singletons. Hence \(Q\) has a nontrivial
module, contradicting its primitivity. Therefore
every canonical factor uses at most two colors.

If \(Q\) has vertices \(q_1,\ldots,q_t\), the substitution
\(Q[H_1,\ldots,H_t]\) is obtained by replacing each \(q_j\) with \(H_j\),
retaining the colors within each \(H_j\), and assigning every edge between
\(H_j\) and \(H_k\) the color of \(q_jq_k\) in \(Q\).
The CIS property is preserved in both directions under substitution:
\(Q[H_1,\ldots,H_t]\) is CIS if and only if \(Q\) and all \(H_j\) are CIS
\cite[Propositions~7 and~10]{AndradeBorosGurvich2018}. Applying this
equivalence at every internal node of the canonical tree shows that all
factors are CIS. Since a \(2\)-graph is just an ordinary graph together with
its complement, its CIS property is exactly the ordinary CIS property.
\end{proof}

\section*{Acknowledgments}
We thank Vladimir Gurvich for helpful comments, for pointing out the
modular-decomposition consequence recorded in
Corollary~\ref{cor:modular-decomposition}, and for drawing our attention
to the relevant earlier references.

\section*{Statement on AI usage}
The main proof strategy, in particular the inductive use of the local configuration around a rainbow triangle together with restriction to a proper induced subgraph, was developed by the authors. ChatGPT was used during the preparation of the manuscript to assist with some technical details, including parts of the proof of Lemma~\ref{lem:restriction}, and with the preparation of Figure~\ref{fig:settling}. It was also used to help check and polish some arguments. All AI-assisted arguments were independently verified and revised by the authors. The authors take full responsibility for all content in this paper.

\end{document}